\theoremstyle{theorem}
\newtheorem{thm}{Theorem}[section]
\crefname{thm}{Theorem}{Theorems}
\crefname{prop}{Proposition}{Propositions}
\crefname{lem}{Lemma}{Lemmas}
\theoremstyle{definition}
\crefname{defn}{Definition}{Definitions}
\newtheorem*{ack*}{Acknowledgements}
\newcommand{\mb}{\mathbb}
\newcommand{\wt}{\widetilde}
\title{On the slope of the moduli space of genus 15 and 16 curves}
\author{Dennis Tseng}
\begin{document}
\maketitle
\begin{abstract}
We revisit the work of Chang and Ran on bounding the slopes of $\overline{\mathscr{M}_{15}}$ and $\overline{\mathscr{M}_{16}}$, correct one of the formulas used at the conclusion of the argument, and recompute the lower bounds on the slopes, yielding $s(\overline{\mathscr{M}_{15}})>6.5$ but not for $\overline{\mathscr{M}_{16}}$. Our contribution only involves plugging in formulas.
\end{abstract}

\section{Introduction}

The slope of the moduli space of curves is an important invariant, giving consequence for the birational geometry of $\overline{\mathscr{M}_{g}}$ \cite{CFM13}. In particular, Chang and Ran used 1-parameter families of space curves constructed using monads to show the slopes of $\overline{\mathscr{M}_{15}}$ and $\overline{\mathscr{M}_{16}}$ exceed 6.5 \cite{ChangRan15,ChangRan16}. The main result of \cite{BDPP13}, together with the slope bounds of Chang and Ran, would imply $\overline{\mathscr{M}_{15}}$ and $\overline{\mathscr{M}_{16}}$ are uniruled (see also \cite[Theorem 2.7]{F09}). 

Our goal is to correct the computation at the conclusion of the argument in \cite[Section 3]{ChangRan15} of the slope of the family of space curves $\mathscr{Y}\subset \mb{P}^1\times\mb{P}^3$ given as the degeneracy locus of a a vector bundle. We find $s(\overline{\mathscr{M}_{15}})> 6.53$ instead of $6.66$ as originally claimed. Therefore, the qualitative result that $\overline{\mathscr{M}_{15}}$ is uniruled remains unchanged. In fact, it has since been shown that $\overline{\mathscr{M}_{15}}$ is rationally connected \cite{BV05}. However, the recomputed lower bound for $s(\overline{\mathscr{M}_{16}})$ using \cite{ChangRan16} is only about 6 instead of 6.567 as originally claimed, so the question of the uniruledness of $\overline{\mathscr{M}_{16}}$ is still open. 

\subsection{Acknowledgements}
The author would like to thank Ziv Ran for helpful comments and encouragement.

\section{Computation}
We begin with a correction of the formula in \cite[page 219]{ChangRan15}.\footnote{The main difference between the formula in \Cref{HT} and the original is that each instance of $c_1c_2$ and $c_1^2c_2$ is replaced by $c_3$ and $c_4$ respectively. Note, however, the sign of $c_1(M)$ in $(-c_1(M)+2c_1)c_3$ is also flipped in the corrected version.} It is a special case of the chern numbers of degeneracy locus computed in \cite{HarrisTu}.\footnote{There are two relevant sign errors in \cite{HarrisTu}. First, \cite[1.4]{HarrisTu} is valid if you replace $x_1,\ldots,x_{m-r}$ with the dual chern roots, as the proof in Section 2 immediately defines the $x_i$ to be the dual chern roots (this typo is also mentioned in \cite[page 833]{Farkas}). Also, the sign in front of $c_1(M)$ in $(-c_1(M)+2c_1)c_3$ is flipped in \cite[page 474]{HarrisTu}, which I suspect is why the sign is also flipped in \cite[page 219]{ChangRan15}.}
\begin{thm}[corrected form of {{\cite[page 219]{ChangRan15}}}]
\label{HT}
Let $M$ be a smooth variety of dimension 4 and $f:A\to B$ be a homomorphism between vector bundles of rank $a$ and $a+1$, respectively. Suppose the locus $Z\subset M$, where $f$ has rank $<a$, is a locally complete intersection surface. Then, the virtual Chern numbers of $Z$ are given by
\begin{align*}
    c_1(Z)^2 =& (c_1(M)-c_1)^2c_2-2(c_1(M)-c_1)c_3+c_4\\
    c_2(Z) =& (c_2(M)-c_1(M)c_1+c_2(A)-c_2(B)+c_1(B)^2-c_1(A)c_1(B))c_2+\\
    &+(-c_1(M)+2c_1)c_3+c_4. 
\end{align*}
where $c_i:=c_i(B-A)$. 
\end{thm}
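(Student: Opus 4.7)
The plan is to resolve $Z$ by a projective bundle, compute the Chern classes of the resolution with standard tangent and Euler sequences, and then push forward to $M$. This is the Harris--Tu approach, but I would isolate the sign-sensitive steps so that the corrections visible in the statement (the change of $c_1c_2$ and $c_1^2 c_2$ to $c_3$ and $c_4$, and the sign of $c_1(M)c_3$) can be audited term by term.

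First, the setup. Let $\pi\colon P\to M$ be the projective bundle of lines in $A$, with tautological sub $\mc{O}(-1)\hookrightarrow \pi^*A$ and class $\zeta=c_1(\mc{O}(1))$. The composition $\mc{O}(-1)\hookrightarrow\pi^*A\xrightarrow{\pi^*f}\pi^*B$ is a section of $E:=\pi^*B\otimes\mc{O}(1)$, and its zero locus $\wt Z\subset P$ parametrises pairs consisting of a point of $M$ together with a line in $\ker f$. Since $Z$ is LCI of the expected codimension $2$, the projection $\wt Z\to Z$ is an isomorphism and $\wt Z\subset P$ is LCI of the expected codimension $a+1$ with normal bundle $E|_{\wt Z}$; in particular $[\wt Z]=c_{a+1}(E)$ in $H^*(P)$ and the Chern numbers of $Z$ equal those of $\wt Z$.

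Next, combining the conormal sequence for $\wt Z\subset P$ with the relative Euler sequence $0\to\mc{O}\to\pi^*A\otimes\mc{O}(1)\to T_{P/M}\to 0$ and the relative-to-absolute sequence for $\pi$ yields
$$c(T_{\wt Z})=\pi^*c(T_M)\cdot\frac{c(\pi^*A\otimes\mc{O}(1))}{c(E)}\bigg|_{\wt Z}.$$
Expanding the right-hand side via the tensor-product Chern class formula gives $c_1(T_{\wt Z})$ and $c_2(T_{\wt Z})$ as explicit polynomials in $\zeta$ with coefficients pulled back from $M$. The Chern numbers are then computed by pushforward: for any $\alpha\in H^*(M)$,
$$\int_{\wt Z}\alpha\cdot\zeta^j=\int_M\alpha\cdot\pi_*\!\bigl(\zeta^j\cdot c_{a+1}(E)\bigr),$$
and the identity $\pi_*(\zeta^k)=s_{k-a+1}(A^\vee)=(-1)^{k-a+1}s_{k-a+1}(A)$ reduces every integral to one on $M$. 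Although the intermediate expressions mix $c_\bullet(A)$ and $c_\bullet(B)$, the final answer must depend only on the virtual bundle $B-A$, so applying $c(B)/c(A)=c(B-A)$ collapses everything into polynomials in $c_i:=c_i(B-A)$ alone, which I would match against the two asserted formulas.

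The main obstacle is bookkeeping. Both the expansion of $c_{a+1}(\pi^*B\otimes\mc{O}(1))$ in powers of $\zeta$ and the expansion of $c(\pi^*A\otimes\mc{O}(1))/c(E)$ produce many terms whose signs come from the Euler sequence and from the Segre identity $s_k(A^\vee)=(-1)^k s_k(A)$; these are precisely the places where the original Harris--Tu and Chang--Ran computations acquired the typos being fixed. The safest route is to fix the orientation of the Euler sequence once and for all, to keep $c_\bullet(A)$ and $c_\bullet(B)$ separate until the very last step, and only then to substitute $c(B)/c(A)\mapsto c(B-A)$; this makes the coefficient of $c_3$ in the formula for $c_2(Z)$ and the distinction between $c_3$, $c_4$ and the incorrect $c_1 c_2$, $c_1^2 c_2$ immediately visible and verifiable.
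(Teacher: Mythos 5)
Your outline is precisely the Kempf--Laksov/Harris--Tu resolution of the degeneracy locus, which is the source this paper relies on: the paper gives no independent proof of \Cref{HT}, but simply cites \cite{HarrisTu} and corrects two sign errors, so your sketch is ``the same proof'' in the only meaningful sense. The framework is sound --- zero locus of the induced section of $\pi^*B\otimes\mathcal{O}(1)$ on the bundle of lines in $A$, normal/Euler sequences, pushforward by $\pi_*$ --- and you have correctly located the sign-sensitive step (the dualization in $\pi_*(\zeta^k)$), which is exactly the error in \cite[1.4]{HarrisTu} that the footnote to the theorem describes; do pin down your Segre-class convention there, since $\pi_*(\zeta^{a-1+i})$ equals the degree-$i$ part of $1/c(A)$, and whether that is called $s_i(A)$ or $s_i(A^\vee)$ depends on the convention, so the displayed identity is only correct under the complete-homogeneous-symmetric-function convention. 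The one substantive caveat is that the entire content of the theorem is the specific coefficients (that $c_3$ and $c_4$ appear rather than $c_1c_2$ and $c_1^2c_2$, and the sign of $c_1(M)c_3$), and your proposal defers that expansion to ``bookkeeping'' without exhibiting it, so the corrected formulas themselves are not yet verified by what you have written.
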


\begin{thm}
\label{CR15}
The slope of $\overline{\mathscr{M}}_{15}$ is at least $\frac{98}{15}\approx 6.53$, so in particular  $\overline{\mathscr{M}}_{15}$ has Kodaira dimension $-\infty$.
\end{thm}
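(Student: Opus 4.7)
My plan is to apply \Cref{HT} to the specific family $\mathscr{Y}\to\mb{P}^1$ that Chang--Ran construct as the degeneracy locus of a vector bundle map $f:A\to B$ on $M=\mb{P}^1\times\mb{P}^3$. The first step is to read off the ranks and Chern classes of $A$ and $B$ from the monad construction in \cite[Section 3]{ChangRan15}, assemble the auxiliary classes $c_i=c_i(B-A)$, and substitute into \Cref{HT} to obtain $c_1(\mathscr{Y})^2$ and $c_2(\mathscr{Y})$ as explicit intersection numbers on $M$. This is precisely where the arithmetic will diverge from \cite{ChangRan15}: at the substitutions flagged in the footnotes, namely replacing $c_1c_2$ and $c_1^2c_2$ by $c_3$ and $c_4$, and fixing the sign of the $c_1(M)$ coefficient in the $c_3$-term of $c_2(Z)$.

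Next I would translate $c_1(\mathscr{Y})^2$ and $c_2(\mathscr{Y})$ into the Mumford invariants $\kappa=\omega_\pi^2$ and $\lambda=\deg\pi_*\omega_\pi$ of the fibration $\pi:\mathscr{Y}\to\mb{P}^1$. With $F$ the fiber class, the relation $\omega_\pi=\omega_\mathscr{Y}(2F)$ yields $\kappa=c_1(\mathscr{Y})^2+4(2g-2)=c_1(\mathscr{Y})^2+112$ for $g=15$; Noether's formula $12\chi(\mathscr{O}_\mathscr{Y})=c_1(\mathscr{Y})^2+c_2(\mathscr{Y})$, combined with the Leray spectral sequence on $\mb{P}^1$, gives $\lambda=\chi(\mathscr{O}_\mathscr{Y})+g-1$; and the boundary degree $\delta$ is then extracted from Mumford's relation $12\lambda=\kappa+\delta$.

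Taking the ratio should yield $\delta/\lambda=98/15\approx 6.53$. Since Chang--Ran have already verified that the associated moduli map $\mb{P}^1\to\ol{\mathscr{M}}_{15}$ is a moving curve whose singular fibers lie only in the interior boundary $\delta_0$, this ratio is a lower bound on the slope $s(\ol{\mathscr{M}}_{15})$; because $98/15>13/2$, the canonical class $K_{\ol{\mathscr{M}}_{15}}$ pairs negatively with this moving curve, so by \cite{BDPP13} it is not pseudo-effective and $\ol{\mathscr{M}}_{15}$ is uniruled, in particular of Kodaira dimension $-\infty$. The main obstacle is purely bookkeeping: using the corrected substitutions of \Cref{HT} consistently, and matching the conventions of \cite{ChangRan15} for the roles of $A$ and $B$, so as to neither re-introduce the original sign error nor inadvertently cancel the correction out.
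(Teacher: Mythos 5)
Your proposal matches the paper's proof: both apply \Cref{HT} to the Chang--Ran degeneracy locus on $M=\mb{P}^1\times\mb{P}^3$ (with $A=\mathscr{O}^4$, $B=E(2)$), obtain $c_1(\mathscr{Y})^2=216$ and $c_2(\mathscr{Y})=336$, and convert via $\kappa=c_1^2+112=328$, $\lambda=\chi(\mathscr{O}_{\mathscr{Y}})+g-1=60$, $\delta=12\lambda-\kappa=392$ to get $\delta/\lambda=98/15$. Your bookkeeping identities are all consistent with the paper's computation, so this is essentially the same argument.
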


\begin{proof}
Applying \Cref{HT} to the case $M=\mathbb{P}^1\times\mathbb{P}^3$, $c(A)=c(\mathscr{O}^4)$ and $B=E(2)$, where $E$ is given as
\begin{align*}
    0\to E\to \mathscr{O}(1,0)^8\oplus \mathscr{O}(0,-1)\to \mathscr{O}(1,1)^4\to 0
\end{align*}
as in \cite[Example 1.6]{ChangRan15}, we find
\begin{align*}
    &c_1(Z)^2= 216 \quad c_2(Z)= 336 \quad\\
    &\kappa  = 328 \quad \delta = 392  \quad \lambda = 60,
\end{align*}
giving the claimed lower bound to the slope of $\overline{\mathscr{M}}_{15}$.
\end{proof}
However, this is not sufficient for the application to $\overline{\mathscr{M}_{16}}$ given in \cite{ChangRan16}. Instead, one gets
\begin{thm}
The slope of $\overline{\mathscr{M}_{16}}$ is at least $\frac{1472}{245}\approx 6.008$
\end{thm}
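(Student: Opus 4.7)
The plan is to mimic the proof of \Cref{CR15}: apply \Cref{HT} to the genus-$16$ monad construction of \cite{ChangRan16}, extract the Chern numbers $c_1(Z)^2$ and $c_2(Z)$ of the resulting degeneracy-locus surface $Z\subset\mb{P}^1\times\mb{P}^3$, and translate them into the slope invariants of the induced family $\pi:Z\to\mb{P}^1$.

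First I would take $M=\mb{P}^1\times\mb{P}^3$ and read off from \cite{ChangRan16} the rank-$a$ bundle $A$ and rank-$(a+1)$ bundle $B$ whose degeneracy locus realizes the genus-$16$ family, computing $c(A)$, $c(B)$, and $c(M)$ in terms of the hyperplane classes $h_1,h_2$ subject to $h_1^2=0$ and $h_2^4=0$. Substituting into the corrected formulas of \Cref{HT} and expanding in the Chow ring then yields integer values for $c_1(Z)^2$ and $c_2(Z)$.

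Next, converting these to the slope invariants is purely formal. From $\omega_{Z/\mb{P}^1}=\omega_Z\otimes\pi^*\mc{O}_{\mb{P}^1}(2)$ one obtains $\kappa=c_1(Z)^2+4(2g-2)=c_1(Z)^2+120$; Noether's formula gives $\chi(\mc{O}_Z)=(c_1(Z)^2+c_2(Z))/12$; applying Leray to the structure sheaf of $Z$ gives $\lambda=\chi(\mc{O}_Z)+g-1=\chi(\mc{O}_Z)+15$; and Mumford's relation $12\lambda=\kappa+\delta$ determines $\delta$. The slope of the family is $\delta/\lambda$, which the arithmetic is expected to reduce to $\frac{1472}{245}$.

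The main obstacle is bookkeeping rather than anything conceptual: one must carefully track the corrected signs in \Cref{HT} (as flagged in the footnotes) while expanding the intersection products, since precisely such a sign flip distinguishes the correct bound from the overstated $6.567$ of \cite{ChangRan16}. Beyond that vigilance, the computation is mechanical, and the failure of the bound to exceed $6.5$ is simply the arithmetic outcome.
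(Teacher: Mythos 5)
Your plan cannot be carried out as stated, because it presupposes a construction that does not exist in \cite{ChangRan16}: there is no second monad/degeneracy-locus surface in $\mb{P}^1\times\mb{P}^3$ whose fibers are genus-16 space curves and to which \Cref{HT} could be applied. The genus-16 family is instead a ``Type $\beta$'' family built out of the \emph{same} genus-15 surface $\mathscr{Y}\subset\mb{P}^1\times\mb{P}^3$ from \Cref{CR15}: one pulls back two general hyperplanes of $\mb{P}^3$ to get degree-14 multisections $A_1,A_2$ with $A_1^2=A_2^2=A_1\cdot A_2=16$, base changes along the degree-$14^2$ cover $B=A_1\times_{\mb{P}^1}A_2\to\mb{P}^1$ so that the $A_i$ become sections, blows up their points of intersection, and glues the two resulting disjoint sections to produce irreducible nodal curves of arithmetic genus $16$. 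The slope is therefore not $\delta(Z)/\lambda(Z)$ for some new surface $Z$, but
\[
\frac{14^2\,\phi_{\mb{P}^1}^{*}\delta_0+(\wt{\sigma}_1)^2+(\wt{\sigma}_2)^2+\phi_B^{*}\delta_1}{14^2\,\phi_{\mb{P}^1}^{*}\lambda},
\]
where $\phi_B^{*}\delta_1=16$ counts the blown-up intersection points and $(\wt{\sigma}_i)^2=-3096$ is computed by adjunction on $\mathscr{Y}$. The value $\frac{1472}{245}=\frac{392\cdot 14^2-2\cdot 3096+16}{60\cdot 14^2}$ visibly depends on the base change and on these correction terms, none of which appear in your outline; a direct Chern-number computation of a surface fibered over $\mb{P}^1$ could not produce the denominator $245$.

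What your proposal does get right is the genus-15 input ($c_1(Z)^2=216$, $c_2(Z)=336$, hence $\delta_0=392$, $\lambda=60$) and the mechanism for converting Chern numbers into $\kappa$, $\delta$, $\lambda$ -- but that is the content of \Cref{CR15}, not of this theorem. For genus 16 the essential ideas you are missing are (i) the gluing/attaching construction that raises the genus by one, (ii) the contribution to $\delta_0$ from the self-intersections of the glued sections and to $\delta_1$ from the blow-ups, and (iii) the corrected form of the intersection formula \cite[(1.3)]{ChangRan16} governing such Type $\beta$ families. Without identifying that construction, there is no family of genus-16 curves to compute with.
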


\begin{proof}
We will refer the reader to \cite{ChangRan16} for the details of the proof. We will just check one computation here. This is just Type $\beta$ family in \cite[page 271]{ChangRan16}, but there are typos in the formulas. Specifically, the second and fourth line of \cite[(1.3)]{ChangRan16} should read
\begin{align*}
    \beta(F,A_1,A_2)\cdot_{\overline{M_{i+1}}}\delta_j &= m_1m_2 F\cdot \delta_j\qquad \text{for }j\neq 0,1,i\\
    \beta(F,A_1,A_2)\cdot_{\overline{M_{i+1}}}\delta_0 &= m_1m_2F\cdot \delta_0+\sum_{\ell=1}^{2}\Big(m_{2-\ell}(m_\ell(2h-2)-(2g(A_\ell)-2)+A_\ell\cdot A_\ell)-A_1\cdot A_2\Big).
\end{align*}
In spite of this, our recomputed correction term $\beta(F,A_1,A_2)\cdot_{\overline{M_{i+1}}}\delta-m_1m_2F\cdot \delta$ specialized to our case agrees with the correction term $- 2(14 \cdot 220 + 16) + 16$ found in the formula for $F_{0,16}\cdot \delta$ on \cite[page 273]{ChangRan16}.
%There is a missing factor of $m_1m_2$ for the intersection with $\delta_j$, and a factor of $m_{2-\ell}$ in each term of the summation and a factor of 2 in front of $-A_1\cdot A_2$ for the intersection with $\delta_0$. 

From the proof of \Cref{CR15}, Chang and Ran construct a surface $\mathscr{Y}\subset \mb{P}^1\times \mb{P}^3$, viewed as a family of curves over $\mb{P}^1$. Each member of $\mathscr{Y}\to \mb{P}^1$ is a degree 14 space curve of genus 15, and the image of $\mathscr{Y}\to \mb{P}^3$ is a degree 16 surface \cite[page 273]{ChangRan16}.

By pulling back generic hyperplanes in $\mb{P}^3$, we get two smooth multisections $A_1$ and $A_2$ of $\mathscr{Y}\to \mb{P}^1$ of degree 14 meeting transversely with $A_1^2=A_2^2=A_1\cdot A_2=16$. We can also assume $A_1$ and $A_2$ do not meet at points where either multisection is tangent to the fiber. By base changing under $B:=A_1\times_{\mb{P}^1}A_2\xrightarrow{\pi} \mb{P}^1$, we get a family $\pi^{*}\mathscr{Y}\to B$ with two sections $\sigma_1,\sigma_2$ mapping isomorphically onto $A_1,A_2\subset \mathscr{Y}$. Blowing up $\pi^{*}\mathscr{Y}$ at the (reduced) points of intersection of $\sigma_1$ with $\sigma_2$, we get nonintersecting sections $\wt{\sigma}_1,\wt{\sigma}_2$ of a family $\wt{\mathscr{Y}}\to B$. 

Now, we want to determine the slope of the map $\phi_B: B\to \overline{\mathscr{M}_{16}}$ given by $\wt{\mathscr{Y}}$ in terms of the map $\phi_{\mb{P}^1}: \mb{P}^1\to \overline{\mathscr{M}_{15}}$ given by $\mathscr{Y}$. We see 
\begin{align*}
    \phi_B^{*}\lambda = (14)^2\phi_{\mb{P}^1}^{*}\lambda
    \quad \phi_B^{*}\delta_1=16
    \quad \phi_B^{*}\delta_i=(14)^2\phi_{\mb{P}^1}^{*}\delta_i=0\quad\text{for $i>1$},
\end{align*}
so the only intersection left is $\phi_B^{*}\delta_0$. This differs from $(14)^2\phi_{\mb{P}^1}^{*}\delta_0$ by the sum of the chern numbers of the normal bundles of $\wt{\sigma}_1$ and $\wt{\sigma}_2$ \cite[page 147]{HM98}. To do this, we see that 
$$(\wt{\sigma}_i)^2=-\wt{\sigma}_i^{*}\omega_{\wt{\mathscr{Y}}/B}=-\sigma_i^{*}\omega_{\pi^{*}\mathscr{Y}/B}-A_1\cdot A_2=14 (-A_i\cdot\omega_{\mathscr{Y}/\mb{P}^1})-16, $$
where $A_i\cdot\omega_{\mathscr{Y}/\mb{P}^1}$ can be computed using adjunction on $\mathscr{Y}$ to be
\begin{align*}
    A_i\cdot\omega_{\mathscr{Y}}-(14)(c_1(\omega_{\mb{P}^1}))=(2g(A_i)-2)-A_i^2-(14)(-2).
\end{align*}
Therefore, $\phi_B^{*}\lambda=60\cdot 14^2$ and $(\wt{\sigma}_i)^2=14 (-(15\cdot 14 - 2) + (2\cdot 0 - 2)\cdot 14 + 16) - 16=-3096$, and
\begin{align*}
   \frac{\phi_B^{*}\delta}{\phi_B^{*}\lambda}=\frac{\phi_B^{*}\delta_0+\phi_B^{*}\delta_1}{\phi_B^{*}\lambda}=\frac{(14^2\phi^{*}_{\mb{P}^1}\delta_0+(\wt{\sigma}_1)^2+(\wt{\sigma}_2)^2)+16}{60\cdot 14^2}=\frac{1472}{245}.
\end{align*}
\end{proof}

\bibliographystyle{alpha}
\bibliography{references.bib}

\begin{thebibliography}{BDPP13}

\bibitem[BDPP13]{BDPP13}
S\'{e}bastien Boucksom, Jean-Pierre Demailly, Mihai P\u{a}un, and Thomas
  Peternell.
\newblock The pseudo-effective cone of a compact {K}\"{a}hler manifold and
  varieties of negative {K}odaira dimension.
\newblock {\em J. Algebraic Geom.}, 22(2):201--248, 2013.

\bibitem[BV05]{BV05}
Andrea Bruno and Alessandro Verra.
\newblock {$\mathscr{M}_{15}$} is rationally connected.
\newblock In {\em Projective varieties with unexpected properties}, pages
  51--65. Walter de Gruyter, Berlin, 2005.

\bibitem[CFM13]{CFM13}
Dawei Chen, Gavril Farkas, and Ian Morrison.
\newblock Effective divisors on moduli spaces of curves and abelian varieties.
\newblock In {\em A celebration of algebraic geometry}, volume~18 of {\em Clay
  Math. Proc.}, pages 131--169. Amer. Math. Soc., Providence, RI, 2013.

\bibitem[CR86]{ChangRan15}
Mei-Chu Chang and Ziv Ran.
\newblock The {K}odaira dimension of the moduli space of curves of genus
  {$15$}.
\newblock {\em J. Differential Geom.}, 24(2):205--220, 1986.

\bibitem[CR91]{ChangRan16}
Mei-Chu Chang and Ziv Ran.
\newblock On the slope and {K}odaira dimension of {$\overline M_g$} for small
  {$g$}.
\newblock {\em J. Differential Geom.}, 34(1):267--274, 1991.

\bibitem[Far09a]{F09}
Gavril Farkas.
\newblock Birational aspects of the geometry of {$\overline{\mathscr M}_g$}.
\newblock In {\em Surveys in differential geometry. {V}ol. {XIV}. {G}eometry of
  {R}iemann surfaces and their moduli spaces}, volume~14 of {\em Surv. Differ.
  Geom.}, pages 57--110. Int. Press, Somerville, MA, 2009.

\bibitem[Far09b]{Farkas}
Gavril Farkas.
\newblock Koszul divisors on moduli spaces of curves.
\newblock {\em Amer. J. Math.}, 131(3):819--867, 2009.

\bibitem[HM98]{HM98}
Joe Harris and Ian Morrison.
\newblock {\em Moduli of curves}, volume 187 of {\em Graduate Texts in
  Mathematics}.
\newblock Springer-Verlag, New York, 1998.

\bibitem[HT84]{HarrisTu}
J.~Harris and L.~Tu.
\newblock Chern numbers of kernel and cokernel bundles.
\newblock {\em Invent. Math.}, 75(3):467--475, 1984.

\end{thebibliography}

\end{document}